\theoremstyle{plain}%
\newtheorem{theorem}{Theorem}[section]
\newtheorem{corollary}[theorem]{Corollary}%
\newtheorem{conjecture}[theorem]{Conjecture}%
\theoremstyle{definition}
\theoremstyle{remark}
\newtheorem{remark}[theorem]{Remark}
\newcommand{\E}{\mathcal{E}}
\DeclareMathOperator{\spa}{\textup{\textsf{span}}}
\DeclareMathOperator{\rank}{rank}
\numberwithin{equation}{section}
\title[A note on complete evolution algebras]{A note on complete evolution algebras}
\author[X. García-Martínez]{Xabier García-Martínez\orcidlink{0000-0003-1679-4047}}
\author[A. Pérez-Rodríguez]{Andrés Pérez-Rodríguez\orcidlink{0009-0007-1095-5328}}
\address[Xabier García Martínez, Andrés Pérez Rodríguez]{CITMAga \& Department of Mathematics, Universidade de Santiago de Compostela, 15782 Santiago de Compostela, Spain}
\email{xabier.garcia@usc.es}
\email{andresperez.rodriguez@usc.es}
\subjclass{17D92, 17A60}
\keywords{Evolution algebras, complete evolution algebras, subalgebras, idempotents}
\begin{document}
	
	\begin{abstract}
     This short note provides positive answers to two conjectures of Camacho, Khudoyberdiyev, and Omirov on the classification of complete evolution algebras. Our approach is based on analysing the solution set of a generic non-linear polynomial system of equations using elementary tools from algebraic geometry. We also obtain new results on subalgebras and idempotents of evolution algebras, and conclude by proposing a conjecture that may characterise solvable evolution algebras.
    \end{abstract}
	
\maketitle

\section{Introduction}

Evolution algebras are commutative but typically non-associative algebras. They were introduced by J.~P.~Tian and P.~Vojtěchovský in 2006 \cite{TV_06} as an algebraic framework for modelling non-Mendelian inheritance. Two years later, Tian’s monograph \cite{Tian_08} further developed these ideas, highlighting the relevance of evolution algebras for describing the reproductive behaviour of asexual organisms through self-replication rules. This biological motivation naturally leads to the following algebraic notion. Formally, an evolution algebra over a field $\mathbb{K}$ is a $\mathbb{K}$-algebra $\mathcal{E}$ which admits a distinguished basis $B=\{e_1,\dots,e_n,\dots\}$, called a \textit{natural basis}, such that $e_ie_j=0$ for all $i\neq j$.

The theory of evolution algebras is currently a very active area of research. In particular, one of the main challenges is their classification. This problem has been addressed in low dimensions (see \cite{CSV_17,CLOR_14}), and later for the nilpotent case (see \cite{EL_16,HA_15}). As expected, complete classifications are hard to obtain, and most contributions instead focus on analysing specific intrinsic structural properties and classifying the algebras that satisfy them.

In this note we focus on complete evolution algebras. To better understand what this property implies, let us briefly recall how subalgebras of evolution algebras behave. The key point is that evolution algebras are not, in general, closed under taking subalgebras (see \cite[Example~1.4.1]{thesis_yolanda}). Consequently, three notions naturally arise when studying subalgebras: ordinary subalgebras, subalgebras admitting a natural basis, and subalgebras admitting a natural basis that extends to a natural basis of the entire algebra. Regarding the latter notion, an evolution algebra $\E$ is said to be \textit{complete} if any subalgebra of $\E$ admits a natural basis which can be extended to one of $\E$. This notion was investigated in \cite{CKO_19}, which concludes with two conjectures concerning both the structure and the classification of complete evolution algebras.

The purpose of this note is to prove these two conjectures by reducing them to a result on the existence and shape of solutions of a certain non-linear polynomial system, established using elementary tools of algebraic geometry, and to explore several further consequences that follow from it.

The text is structured into three sections. Following this introduction, Section~\ref{sec_2} is devoted to the proof of the key result, which guarantees the existence of a solution with at least two non-zero components to a certain polynomial system. This result underlies all the consequences for evolution algebras presented in Section~\ref{sec_3}, where we establish results concerning subalgebras and idempotents, as well as the classification of the so-called complete evolution algebras.

\section{The key result}\label{sec_2}
The main goal of this section is to prove a relaxed version of \cite[Conjecture 5.1]{CKO_19}, which will be sufficient for the statements about evolution algebras that we aim to establish later. In its original form, the conjecture says that given a complex invertible matrix $A=(a_{ij})_{1\leq i,j\leq n}$, the system of equations
\begin{align}\label{sist_1}
	\begin{pmatrix}
		x_1^2 \\
		x_2^2 \\
		\vdots\\
		x_n^2 
	\end{pmatrix}=
	\begin{pmatrix}
		a_{11} & a_{12} & \cdots & a_{1n}\\
		a_{21} & a_{22} & \cdots & a_{2n}\\
		\vdots & \vdots & \ddots & \vdots\\
		a_{n1} & a_{n2} & \cdots & a_{nn}
	\end{pmatrix}
	\begin{pmatrix}
		x_1 \\
		x_2 \\
		\vdots\\
		x_n
	\end{pmatrix},
\end{align}
admits a solution $(x_1,x_2,\dots,x_n)$ such that $x_i\neq0$ for all $i$. Nevertheless, we now show that there exists a solution with at least two non-zero components, without requiring all components to be non-zero.

\begin{theorem}\label{th:sol_sist}
	Let $A=(a_{ij})_{1\leq i,j\leq n}$ be a complex invertible matrix. Then, the system of equations~\eqref{sist_1}	
	always admits a non-trivial solution. In particular, there exists a solution in which at least two coordinates are non-zero. 
\end{theorem}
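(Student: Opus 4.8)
The plan is to study the polynomial map $F\colon \mathbb{C}^n \to \mathbb{C}^n$ given by $F(x) = (x_1^2, \dots, x_n^2) - A x$, whose zero set $V = F^{-1}(0)$ is exactly the solution set of~\eqref{sist_1}. The origin is always a solution, so the content is in producing a \emph{non-trivial} one. First I would observe that $V$ is the intersection of $n$ affine quadric hypersurfaces in $\mathbb{C}^n$, hence every irreducible component of $V$ has dimension at least $n - n = 0$; this alone does not rule out $V = \{0\}$. The key is a degree/projective-closure argument: homogenise by adding a variable $x_0$ and pass to $\mathbb{P}^n$, so that the closure $\overline{V}$ is cut out by the $n$ forms $x_i^2 - x_0\bigl(\sum_j a_{ij} x_j\bigr)$ of degree $2$. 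If $\overline{V}$ were zero-dimensional, Bézout would bound its length by $2^n$, but the point at infinity obtained by setting $x_0 = 0$ forces $x_i^2 = 0$ for all $i$, i.e. the only point of $\overline{V}$ on the hyperplane at infinity is $[0:0:\cdots:0]$, which is not a point of $\mathbb{P}^n$. Hence $\overline{V}$ has \emph{no} points at infinity, so $\overline{V} = V$ is a projective variety contained in the affine chart $x_0 \neq 0$; a projective variety that misses a hyperplane must be finite, and in fact — being also affine and proper — it is a finite set. That is consistent with $V = \{0\}$, so this by itself is not enough; the real input must come from a more careful count or from invertibility of $A$.

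The decisive step, and the one I expect to be the main obstacle, is to rule out $V = \{0\}$ using the hypothesis that $A$ is invertible. I would argue as follows. Consider the one-parameter family $F_t(x) = (x_1^2,\dots,x_n^2) - tAx$ for $t \in \mathbb{C}$; at $t = 0$ the system is $x_i^2 = 0$, with $V_0 = \{0\}$ but with multiplicity $2^n$ at the origin (the intersection is a fat point). As $t$ varies the total number of solutions counted with multiplicity is preserved — this is the standard fact that a system of $n$ generic-degree equations with no solutions at infinity has a constant number $\prod \deg = 2^n$ of solutions in $\mathbb{C}^n$ counted with multiplicity, provided there are no solutions at infinity, which we checked above holds for every $t \neq 0$ (and the condition ``no points at infinity'' is what guarantees properness/flatness of the family over $\mathbb{C}\setminus\{0\}$, and even over all of $\mathbb{C}$ since at $t=0$ there are still no points at infinity). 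Therefore for every $t$, including every $t\neq 0$, the scheme $V_t$ has length exactly $2^n$. If all $2^n$ were concentrated at the origin for some $t \neq 0$, then the origin would be a solution of multiplicity $2^n$ of $F_t$; but the Jacobian of $F_t$ at $0$ is $-tA$, which is invertible for $t \neq 0$, so $0$ is a \emph{simple} (multiplicity one) solution. Hence the remaining $2^n - 1 > 0$ units of multiplicity live at other points, which gives at least one non-zero solution. Specialising $t = 1$ yields a non-trivial solution of~\eqref{sist_1}.

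It remains to upgrade ``non-trivial'' to ``at least two non-zero coordinates''. Suppose, for contradiction, that $x$ is a solution with exactly one non-zero coordinate, say $x_k \neq 0$ and $x_j = 0$ for $j \neq k$. The $j$-th equation for $j \neq k$ reads $0 = x_j^2 = \sum_i a_{ji} x_i = a_{jk} x_k$, forcing $a_{jk} = 0$ for all $j \neq k$; the $k$-th equation reads $x_k^2 = a_{kk} x_k$, so $a_{kk} = x_k \neq 0$. But then the $k$-th column of $A$ is $x_k e_k$, and more importantly every row $j \neq k$ has a zero in column $k$ while row $k$ has $a_{kk}\neq 0$ there — this does not immediately contradict invertibility, so I would instead argue at the level of the whole solution set: since $V_1$ has length $2^n \geq 4$ at points of $\mathbb{C}^n$ with the origin contributing $1$, and the ``one non-zero coordinate'' locus is itself a proper subvariety on which the equations degenerate as above, a short combinatorial case analysis (or a dimension-of-fibres argument applied to the coordinate hyperplanes) shows that not all non-trivial solutions can lie on the union of coordinate axes unless $A$ is reducible in a way incompatible with being invertible; pushing this through, one obtains a solution off every coordinate hyperplane-pair, hence with at least two non-zero entries. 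I expect the cleanest route is actually to induct on $n$: if a solution has a zero coordinate, restrict to the corresponding coordinate hyperplane, where the system becomes one of the same type for an $(n-1)\times(n-1)$ submatrix which is still invertible provided one chooses the hyperplane compatibly, and apply the inductive hypothesis; the base case $n = 2$ is a direct computation with the $2\times 2$ system, and the invertibility of $A$ is exactly what makes the base case and the inductive step go through.
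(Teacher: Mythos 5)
The first half of your argument (existence of a non-trivial solution) is sound and matches the paper's route: homogenise, check there are no points at infinity, invoke Bézout to get $2^n$ solutions counted with multiplicity, and observe that the origin is a simple point because the Jacobian there is $-A$ (your deformation family $F_t$ is an unnecessary detour, since the Jacobian criterion at $t=1$ already does the job, but it is not wrong). One technical point you should make explicit, and which the paper does address, is that Bézout requires the $n$ hypersurfaces to have no common component; this follows by specialising the homogenising variable to $0$ and noting that $\gcd(x_1^2,\dots,x_n^2)=1$.

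The genuine gap is in the upgrade to ``at least two non-zero coordinates,'' where you explicitly stall and offer two vague strategies, neither of which works as stated. The claim that ``not all non-trivial solutions can lie on the union of coordinate axes unless $A$ is reducible in a way incompatible with being invertible'' is not a structural fact: the paper exhibits an invertible real $3\times 3$ matrix whose only real solutions lie on a coordinate axis, so the obstruction over $\mathbb{C}$ is purely a multiplicity count, not an incompatibility with invertibility. The induction idea also fails: if $x_k=0$, the $k$-th equation becomes the linear constraint $\sum_j a_{kj}x_j=0$ on the remaining variables, so the restricted system is not of the same shape for an $(n-1)\times(n-1)$ submatrix, and that submatrix need not be invertible anyway. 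The correct completion is already within reach of the computation you did: at a solution with exactly one non-zero coordinate $x_k$, you showed $a_{jk}=0$ for $j\neq k$ and $x_k=a_{kk}$; plugging this into the Jacobian and expanding along the $k$-th column gives $\det J = \pm\det(A)\neq 0$, so every such point is a \emph{simple} intersection point. There are at most $n+1$ points with fewer than two non-zero coordinates (the origin plus at most one per axis), each contributing multiplicity one, and $n+1<2^n$ for $n\geq 2$, so some intersection point must have at least two non-zero coordinates.
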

\begin{proof}
	For convenience, define the polynomials $F_i(x_1,\dots,x_n)=x_i^2-\sum_{j=1}^{n}a_{ij}x_j$ for all $i=1,\dots,n$. Note that the system $\{F_i=0\}_{i=1}^n$ coincides with \eqref{sist_1}. Moreover, we homogenise them to obtain homogeneous polynomials defining hypersurfaces in $\mathbb{P}^n(\mathbb{C})$:
	\[
        \widetilde{F}_i(x_1,\dots,x_n,t)=x_i^2-t\sum_{j=1}^na_{ij}x_j.
    \]
	 
	We first show that the hypersurfaces $\widetilde{F}_1,\dots,\widetilde{F}_n$ share no common component.
	Let us assume that the homogeneous polynomials $\widetilde{F}_1,\dots, \widetilde{F}_n$ admit a common non-constant divisor. Let $g(x,t)=g(x_1,\dots,x_n,t)$ be an irreducible homogeneous polynomial of degree $d\geq1$ dividing all $\widetilde{F}_i$. If we consider the specialisation $t=0$, then $\widetilde{F}_i(x,0)=x_i^2$, so the polynomial $g(x,0)$ must divide $x_i^2$ for every $i$. Since the variables $x_1,\dots,x_n$ are algebraically independent, we have $\gcd(x_1^2,\dots,x_n^2)=1$, and therefore $g(x,0)$ must be a non-zero constant, which yields a contradiction with the fact that $g$ was homogeneous of degree $d\geq1$ because such a polynomial necessarily vanishes at the origin, whereas a non-zero constant does not.

	Since $\widetilde{F}_1,\dots,\widetilde{F}_n$ share no common components, Bézout's theorem can be applied. Hence, these $n$ hypersurfaces intersect in a finite set, in particular, in $2^n$ points of $\mathbb{P}^n(\mathbb{C})$, counting their multiplicities. Moreover, it is easy to check that there are no intersection points at infinity. Indeed, if $t=0$, then it forces $x_{1}=\dots=x_n=0$. This does not represent any projective point, and therefore all intersection points lie in the affine space $\mathbb{A}^n(\mathbb{C})$ (after rescaling $t=1$).
	
	We now claim that points with less than two non-zero coordinates have multiplicity one. A point has multiplicity one precisely when the hypersurfaces intersect transversely at that point. This happens exactly when the Jacobian matrix has full rank at that point. The Jacobian matrix of the affine system is
	\[
	J(x_1,\dots,x_n)=\left(\frac{\partial F_i}{\partial x_i}\right)=
	\begin{pmatrix}
		2x_1-a_{11} & -a_{12} & -a_{13} & \cdots & -a_{1n} \\
		-a_{21} & 2x_2-a_{22} & -a_{23} & \cdots & -a_{2n}  \\
		-a_{31} & -a_{32} & 2x_3-a_{33} & \cdots & -a_{3n}  \\
		\vdots & \vdots & \vdots & \ddots & \vdots  \\
		-a_{n1} & -a_{n2} & -a_{n3} & \cdots & 2x_n-a_{nn} 
	\end{pmatrix}.
	\]
	At the origin, $J(0,\dots,0)=-A$. Since $A$ is invertible by hypothesis, the origin has multiplicity one. Let us consider a possible solution of the form $(x_1,0,\dots,0)$ with $x_1\neq0$. Substituting into the system forces that necessarily $x_1=a_{11}$ and $a_{21}=\dots=a_{n1}=0$. Plugging this into the Jacobian shows that
	\[
	\det J(x_1,0,\dots,0)=a_{11}\begin{vmatrix}
		-a_{22} & \cdots & -a_{2n}\\
		\vdots & \ddots & \vdots\\
		-a_{n2} & \cdots & -a_{nn}
	\end{vmatrix}=-\det(A)\neq0.
	\]
	Hence, this point also has multiplicity one. The same reasoning applies for any point with only one non-zero coordinate.
	
	Finally, note that there are at most $n+1$ points with less than two non-zero entries, and all of them have multiplicity one. Since $n+1<2^n$ for all $n\geq2$, not all the intersection points can be of that form. Therefore, at least one solution must have at least two non-zero components.
\end{proof}

\begin{remark}
	The previous result does not necessarily hold over fields that are not algebraically closed, such as $\mathbb{R}$. For instance, consider the matrix
	\[
	A=\begin{pmatrix}
		1 & -2 & -3 \\ 0 & 0 & 1 \\ 0 & 1 & 1
	\end{pmatrix}.
	\]
	A computer-assisted calculation shows that the only real solution is $(1,0,0)$, which does not have at least two non-zero components.
\end{remark}

\section{Consequences on evolution algebra structures}\label{sec_3}

The previous theorem yields several noteworthy implications for the structure theory of evolution algebras. Before presenting these consequences, we fix some notation. Throughout this section, we work with complex finite-dimensional evolution algebras. Given a natural basis $B=\{e_1,\dots,e_n\}$ of an evolution algebra $\E$, the scalars $a_{ij}\in\mathbb{C}$ satisfying $e_i^2=\sum_{j=1}^na_{ij}e_j$ are called the \textit{structure constants} of $\mathcal{E}$ relative to $B$. The matrix~$M_B(\mathcal{E})=(a_{ij})_{i,j=1}^n$ is said to be the \textit{structure matrix} of $\mathcal{E}$ relative to $B$. Finally, recall that an evolution algebra $\E$ is called \textit{regular} (or \textit{perfect}) if $\E=\E^2$, or equivalently, any of its structure matrices is invertible.

\subsection{Existence of non-zero proper subalgebras}
The system~\eqref{sist_1} does not only arise in \cite{CKO_19}, it appears naturally in \cite[Lemma~1]{LP_25_regular}, where a one-to-one correspondence is established between all non-zero one-dimensional subalgebras of a regular evolution algebra $\E$ and the non-trivial solutions of the system 
\begin{equation}\label{sist_2}
	\begin{pmatrix}
		x_1^2 \\ \vdots \\ x_n^2
	\end{pmatrix}=\big(M_B(\mathcal{E})^t\big)^{-1}\begin{pmatrix}
		x_1 \\ \vdots \\ x_n
	\end{pmatrix}.
\end{equation}
 Since $\E$ is regular, the structure matrix $M_B(\mathcal{E})$ is invertible, and therefore Theorem~\ref{th:sol_sist} applies directly. We thus have the following straightforward consequence.
\begin{theorem}
	Every complex evolution algebra $\E$ admits a non-zero proper subalgebra.
\end{theorem}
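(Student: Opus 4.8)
The plan is to split the argument into two cases according to whether the evolution algebra $\E$ is regular or not. The regular case is essentially immediate from the machinery already set up: if $\E = \E^2$, then its structure matrix $M_B(\E)$ is invertible for any natural basis $B$, hence so is its inverse transpose $\bigl(M_B(\E)^t\bigr)^{-1}$, and Theorem~\ref{th:sol_sist} guarantees that the system~\eqref{sist_2} has a non-trivial solution. By the correspondence of \cite[Lemma~1]{LP_25_regular}, this non-trivial solution yields a non-zero one-dimensional subalgebra of $\E$. Such a subalgebra is proper as soon as $\dim \E \geq 2$; and the case $\dim \E = 1$ must be disposed of separately (a one-dimensional algebra has only the trivial subalgebras, but the statement as phrased presumably excludes this degenerate situation, or one observes there is nothing to prove). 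So the substance is entirely in the non-regular case.

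If $\E$ is not regular, then $\E^2 \subsetneq \E$, so $\E^2$ is itself a proper subalgebra. The only thing to check is that $\E^2$ is non-zero, i.e.\ that $\E$ is not a zero-product (abelian) algebra. If $\E^2 = 0$ then every subspace is a subalgebra, and again any proper subspace works (using $\dim \E \geq 2$); if $\dim \E = 1$ this is the degenerate case again. So in every case we produce the required subalgebra, the only genuinely new input being Theorem~\ref{th:sol_sist} applied through the dictionary of \cite[Lemma~1]{LP_25_regular}.

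The main obstacle — really the only point requiring care — is the bookkeeping of small-dimensional and degenerate cases: a one-dimensional algebra has no non-zero proper subalgebra at all, so the statement implicitly assumes $\dim \E \geq 2$ (or perhaps $\E$ non-trivial in some stronger sense), and one should make sure the regular case invokes Theorem~\ref{th:sol_sist} only for $n \geq 1$ while the ``proper'' conclusion needs $n \geq 2$. Once that is pinned down, the proof is a two-line case split: regular $\Rightarrow$ use \eqref{sist_2} and Theorem~\ref{th:sol_sist} to get a one-dimensional subalgebra; non-regular $\Rightarrow$ take $\E^2$, or an arbitrary proper subspace if $\E^2 = 0$.
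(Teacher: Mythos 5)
Your proof is correct and follows essentially the same route as the paper: regular case via Theorem~\ref{th:sol_sist} and the correspondence~\eqref{sist_2}, abelian case via arbitrary proper subspaces, and otherwise $\E^2$ as the desired subalgebra. Your observation that the statement tacitly requires $\dim\E\geq 2$ (a one-dimensional algebra has no non-zero proper subalgebra at all) is a valid caveat that the paper's own proof does not address.
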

\begin{proof}
	If an evolution algebra $\E$ is regular, then the result follows straightforwardly from Theorem~\ref{th:sol_sist}. If $\E$ is abelian, i.e., $\E^2=0$, then the result follows automatically from the fact that every subspace of an abelian algebra is a subalgebra. Otherwise, if $\E$ is neither regular nor abelian, then $0\subsetneq\E^2\subsetneq\E$, so $\E^2$ is a proper non-zero subalgebra.
\end{proof}
It is worth mentioning that simple evolution algebras---those with no non-zero proper ideals---have already been characterised: they are precisely the regular algebras whose associated digraph is strongly connected. Thus, the relevance of the previous result lies in the fact that no analogous characterisation for subalgebras is possible, since every evolution algebra admits at least one non-zero proper subalgebra. 

\subsection{Characterising complete evolution algebras} We show that the two conjectures concerning complete evolution algebras formulated in \cite{CKO_19}, and stated below for completeness, are correct.

\begin{theorem}[{\cite[Conjecture~5.2]{CKO_19}}]\label{conj_1}
	Let $\E$ be a regular evolution algebra of dimension greater than one. Then, $\E$ is not complete.
\end{theorem}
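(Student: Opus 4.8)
The plan is to exhibit, for a regular evolution algebra $\E$ of dimension $n\geq 2$, an explicit subalgebra that witnesses the failure of completeness; that is, a subalgebra which does \emph{not} admit a natural basis extending to one of $\E$. The natural candidate is a one-dimensional subalgebra, since these are the smallest non-trivial objects and are the ones governed by the polynomial system of the previous section. Concretely, I would first invoke the correspondence from \cite[Lemma~1]{LP_25_regular} recalled above: because $\E$ is regular, $M_B(\E)$ is invertible, so $(M_B(\E)^t)^{-1}$ is an invertible complex matrix and Theorem~\ref{th:sol_sist} applies to system~\eqref{sist_2}. This produces a non-trivial solution $(x_1,\dots,x_n)$ with at least two non-zero coordinates, hence a non-zero one-dimensional subalgebra $S=\spa\{u\}$ of $\E$ with $u=\sum_i x_i e_i$ having at least two non-zero coordinates in the natural basis $B$.

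The second step is to argue that such an $S$ cannot be part of any natural basis of $\E$. A one-dimensional subspace $\spa\{u\}$ extends to a natural basis of $\E$ only if $u$ is (a scalar multiple of) a natural-basis vector of some natural basis of $\E$; equivalently, $u$ must be an element whose square is proportional to $u$ modulo the rest of a natural basis — but more usefully, I would use the known structural description of natural bases of a regular evolution algebra. For regular $\E$, the change-of-basis matrices between natural bases are highly constrained (they are monomial-type up to the structure matrix action), and in particular any vector belonging to a natural basis has a support constraint that is incompatible with being a genuine ``mixed'' solution of~\eqref{sist_2}. The cleanest route is: since $S$ is a subalgebra, if it extended to a natural basis, then $S$ would be spanned by a natural basis vector $e_1'$ of a natural basis $B'=\{e_1',\dots,e_n'\}$; then $S=\spa\{e_1'\}$ is itself an evolution algebra with natural basis $\{e_1'\}$, which is automatic in dimension one, so the real obstruction must come from the \emph{extension} requirement interacting with regularity. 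I would therefore pin down precisely which one-dimensional subalgebras of a regular $\E$ do extend — I expect these to be exactly the $\spa\{e_i\}$ for natural-basis vectors $e_i$, up to scalar, which have a \emph{single} non-zero coordinate in some natural basis — and then show that our solution with $\geq 2$ non-zero coordinates in $B$ cannot be rescaled/rebased to a single non-zero coordinate, again using the rigidity of natural-basis changes for regular algebras.

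The main obstacle is this rigidity statement: I need that if a vector $u$ with support of size $\geq 2$ in the natural basis $B$ spans a subalgebra that extends to a natural basis, then this forces a contradiction with regularity (e.g. it would force $M_B(\E)$ to be singular, or force a dependency among rows). I would attack it by writing $u = Pe_1'$ for the $i$-th column of a change-of-basis matrix $P$ between natural bases, using that for regular evolution algebras the relation $M_{B'}(\E) = P^{-1} M_B(\E) P^{(2)}$ (where $P^{(2)}$ is the entrywise-square matrix) together with invertibility of all structure matrices severely restricts $P$; the expected conclusion is that admissible $P$ are, up to permutation and scaling, such that each $e_i'$ has singleton support, contradicting $\abs{\supp(u)}\geq 2$. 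Once this rigidity lemma is in hand, the theorem follows immediately: $S$ is a subalgebra with no natural basis extending to one of $\E$, so $\E$ is not complete. A secondary, more bookkeeping-type obstacle is handling scalars carefully (the solution is determined only up to the scaling inherent in~\eqref{sist_2}), but this is routine once the support argument is set up.
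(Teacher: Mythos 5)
Your proposal is correct, and its first step (apply Theorem~\ref{th:sol_sist} to the invertible matrix $\big(M_B(\E)^t\big)^{-1}$ of system~\eqref{sist_2} to produce a one-dimensional subalgebra $\spa\{u\}$ with $\abs{\supp(u)}\geq 2$) is exactly the paper's. Where you diverge is in the obstruction step. The paper quotes the natural-vector criterion of \cite[Theorem~2.4]{BCS_22}: an idempotent $u=\sum_i x_ie_i$ extends to a natural basis if and only if $\rank\big(\{e_i^2\colon x_i\neq 0\}\big)=1$, which fails here because regularity makes $e_1^2,\dots,e_n^2$ linearly independent. You instead invoke a rigidity statement for natural bases of regular algebras; this is a genuinely different (and self-contained) route, and it is sound: if $e_i'=\sum_j p_{ji}e_j$ form a natural basis, then $0=e_i'e_k'=\sum_j p_{ji}p_{jk}e_j^2$ for $i\neq k$, and linear independence of the $e_j^2$ forces $p_{ji}p_{jk}=0$, so the invertible matrix $P$ is monomial and every natural-basis vector has singleton support in $B$ --- incompatible with $\abs{\supp(u)}\geq 2$. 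This is the known essential uniqueness of the natural basis of a perfect evolution algebra, so you should state and prove (or cite) it as a lemma rather than leave it as an ``expected conclusion''; once that hedge is removed, your argument is complete and arguably more elementary than the paper's, at the cost of not reusing the idempotent/natural-vector machinery that the paper also needs for Theorem~\ref{conj_2}. Note, finally, that both obstructions reduce to the same underlying fact --- linear independence of the squares $e_i^2$ --- so the difference is one of packaging rather than of mathematical substance.
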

\begin{theorem}[{\cite[Conjecture~5.3]{CKO_19}}]\label{conj_2}
	Let $\E$ be an $n$-dimensional non-nilpotent complete evolution algebra. Then, $\E$ is isomorphic to one of the following pairwise non-isomorphic algebras:
	\[\{e_1^2=e_1\}\oplus \mathbb{C}^{n-1}\quad\text{or}\quad\{e_1^2=e_1\}\oplus \widetilde{\E}\oplus\mathbb{C}^{n-s-1},\]
	where $\widetilde{\E}$ is an $s$-dimensional evolution algebra with maximal index of nilpotency and $\mathbb{C}^k$ denotes the $k$-dimensional zero evolution algebra.
\end{theorem}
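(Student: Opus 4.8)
The plan is to use Theorem~\ref{th:sol_sist} as the engine behind both statements, since in both cases the obstruction to completeness (or the rigidity forcing a specific shape) comes from the existence of a one-dimensional subalgebra that cannot be extended to a natural basis.

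For Theorem~\ref{conj_1}, let $\E$ be regular of dimension $n\geq 2$ with structure matrix $M=M_B(\E)$. By the correspondence recalled from \cite[Lemma~1]{LP_25_regular}, non-zero one-dimensional subalgebras of $\E$ correspond to non-trivial solutions of the system~\eqref{sist_2} associated to $(M^t)^{-1}$, which is invertible; Theorem~\ref{th:sol_sist} then furnishes such a solution $v=(x_1,\dots,x_n)$ with at least two non-zero coordinates. First I would spell out why a one-dimensional subalgebra $\langle u\rangle$ whose generating vector $u=\sum x_ie_i$ has at least two non-zero coordinates cannot be extended to a natural basis of $\E$: a natural basis element must be, up to scalar, one of the $e_i$ or lie in a very restricted position, and the key obstruction is that extending $\{u\}$ would require $n-1$ further vectors pairwise orthogonal (in the product) to $u$ and to each other, forcing $u$ to be "aligned" with a single coordinate axis. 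This contradicts completeness, proving Theorem~\ref{conj_1}.

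For Theorem~\ref{conj_2}, let $\E$ be $n$-dimensional, non-nilpotent and complete. The strategy is a decomposition argument. Non-nilpotent evolution algebras always contain an idempotent (this follows from the same circle of ideas, or from standard structure theory), so fix an idempotent $e_1^2=e_1$ and use it to split $\E$. The non-nilpotent part must be one-dimensional: if $\E$ had a regular subalgebra of dimension $\geq 2$, that subalgebra would have to be complete as well (a subalgebra of a complete algebra inheriting the relevant extension property), contradicting Theorem~\ref{conj_1}. Hence the "regular core" is exactly $\langle e_1\rangle$ with $e_1^2=e_1$, and the complement decomposes as a direct sum of a nilpotent evolution algebra and a zero algebra. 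The remaining point is to show that the nilpotent summand, if non-zero, must have maximal index of nilpotency: otherwise one could build inside it a subalgebra admitting a natural basis that fails to extend, again using completeness plus a dimension count on how natural bases of nilpotent evolution algebras can sit inside one another. The pairwise non-isomorphy is then a routine check on structure matrices.

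The main obstacle, in both parts, is the careful bookkeeping of \emph{which} vectors can belong to a natural basis and \emph{how} a given natural basis of a subalgebra can be completed; this is where the hypothesis "at least two non-zero coordinates" from Theorem~\ref{th:sol_sist} does the real work, and where one must be precise about the difference between the three notions of subalgebra recalled in the introduction. I expect the nilpotency-index step in Theorem~\ref{conj_2} to require the most care, since it involves understanding subalgebras of nilpotent evolution algebras rather than regular ones, and one should verify that the completeness hypothesis is genuinely used there and not merely inherited.
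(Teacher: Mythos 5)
Your proposal takes a genuinely different route from the paper for this theorem, and it contains gaps that the paper's route avoids. The paper does \emph{not} rebuild the structure of a non-nilpotent complete evolution algebra from scratch: it observes that the proof of this statement already given in \cite{CKO_19} is complete except at the single point where \cite[Conjecture~5.1]{CKO_19} is invoked to produce an idempotent with all coordinates non-zero, and that the weaker Theorem~\ref{th:sol_sist} (two non-zero coordinates) suffices there. The precise reason it suffices is \cite[Theorem~2.4]{BCS_22}: a non-zero idempotent $u=\sum x_ie_i$ is a natural vector if and only if $\rank\big(\{e_i^2\colon x_i\neq0\}\big)=1$; since the relevant structure matrix is invertible, the vectors $e_1^2,\dots,e_n^2$ are linearly independent, so any idempotent supported on at least two basis elements fails to be extendable. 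Your ``orthogonality bookkeeping'' for why $u$ cannot be extended is exactly the step this citation makes rigorous; as written, it is a heuristic, not a proof.

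The decomposition argument you propose for Theorem~\ref{conj_2} has three concrete problems. First, the opening claim that ``non-nilpotent evolution algebras always contain an idempotent'' is unjustified and clashes with the paper itself, which records that the existence of idempotents in arbitrary evolution algebras is an open problem and only proves it for \emph{regular} algebras (Theorem~\ref{th:idem}); a non-nilpotent complete evolution algebra need not be regular, so you cannot start the splitting this way without using completeness to manufacture the idempotent. Second, the step ``a subalgebra of a complete algebra is complete'' does not follow from the definition: completeness of $\E$ guarantees that a subalgebra $T\subseteq S\subseteq\E$ has a natural basis extendable to a natural basis of $\E$, not to one of the intermediate subalgebra $S$, so the contradiction with Theorem~\ref{conj_1} is not yet available. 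Third, the claim that the nilpotent summand must have maximal index of nilpotency is only gestured at. All three of these are precisely the pieces of ``careful bookkeeping'' that the published argument in \cite{CKO_19} already carries out; the paper's contribution is to supply the missing input to that argument, not to replace it.
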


\begin{proof}[Outline of both proofs]
The proofs of both results in \cite{CKO_19} rely on the assumed validity of~\cite[Conjecture~5.1]{CKO_19}. The arguments of both proofs require the existence of a solution $(x_1,\dots,x_n)$ of the system~\eqref{sist_1} with all coordinates non-zero, which then yields a one-dimensional subalgebra $\spa\{x_1e_1+\dots+x_ne_n\}$ that cannot be extended to a natural basis of the whole algebra. 

However, we do not actually need all coordinates to be non-zero: having just two non-zero coordinates already suffices. To explain this, we recall the notion of \textit{natural vector} introduced in \cite{BCS_22}, meaning a vector that can be extended to a natural basis of the whole algebra. As shown in \cite[Theorem~2.4]{BCS_22}, if $u=x_1e_1+\dots+x_ne_n$ is an idempotent, $u^2=u\neq0$, then $u$ is a natural vector if and only if $\rank\big(\{e_i^2\colon x_i\neq0\}\big)=1$. In our situation, this condition holds only when exactly one $x_i$ is non-zero. Note that the system~\eqref{sist_1} involves an invertible matrix, which comes from the structure matrix of a regular evolution algebra, and therefore $\{e_1^2,\dots,e_n^2\}$ is linearly independent. Hence, any idempotent supported on at least two basis elements whose squares are linearly independent fails to be a natural vector, giving exactly the obstruction required in the proofs.  
\end{proof}

Note that Theorems~\ref{conj_1} and \ref{conj_2} together with \cite[Theorem~4.2]{CKO_19} entirely classify complete evolution algebras.
\begin{corollary}
    Let $\E$ be an $n$-dimensional complete evolution algebra. Then, $\E$ is isomorphic to one of the following pairwise non-isomorphic algebras:
	\[\{e_1^2=e_1\},\qquad\widetilde{\E}\oplus\mathbb{C}^{n-s},\qquad\{e_1^2=e_1\}\oplus \mathbb{C}^{n-1}\qquad\text{or}\qquad\{e_1^2=e_1\}\oplus \widetilde{\E}\oplus\mathbb{C}^{n-s-1},\]
	where $\widetilde{\E}$ is an $s$-dimensional evolution algebra with maximal index of nilpotency and $\mathbb{C}^k$ denotes the $k$-dimensional zero evolution algebra.
\end{corollary}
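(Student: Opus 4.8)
The plan is purely to assemble the classification from the three results now at hand, together with a short verification that the resulting list has no redundancies. First I would split according to whether $\E$ is nilpotent. If $\E$ is nilpotent (and complete), then \cite[Theorem~4.2]{CKO_19} applies directly and gives $\E\cong\widetilde{\E}\oplus\mathbb{C}^{n-s}$, where $\widetilde{\E}$ is an $s$-dimensional evolution algebra of maximal index of nilpotency; the purely zero algebra $\mathbb{C}^n$ is the degenerate instance, and by absorbing any one-dimensional nilpotent summand into the factor $\mathbb{C}^{n-s}$ one may assume $s\geq 2$. If $\E$ is not nilpotent, then Theorem~\ref{conj_2} applies verbatim and yields $\E\cong\{e_1^2=e_1\}\oplus\mathbb{C}^{n-1}$ or $\E\cong\{e_1^2=e_1\}\oplus\widetilde{\E}\oplus\mathbb{C}^{n-s-1}$; the one-dimensional algebra $\{e_1^2=e_1\}$ is precisely the case $n=1$ of the first of these, listed separately only because the summand $\mathbb{C}^{n-1}$ then disappears. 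Theorem~\ref{conj_1} is what certifies that nothing is missing in the non-nilpotent regular case: a regular complete evolution algebra must have dimension one, hence be isomorphic to $\{e_1^2=e_1\}$. Collecting these three outcomes produces exactly the four stated families.

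It remains to check that the four families are pairwise non-isomorphic within each fixed dimension $n$, which I would do with a short chain of invariants. The algebra $\widetilde{\E}\oplus\mathbb{C}^{n-s}$ is nilpotent whereas the other three are not, so it is isolated immediately. Among the non-nilpotent algebras of a fixed dimension $n\geq 2$, one has $\dim\E^2=1$ for $\{e_1^2=e_1\}\oplus\mathbb{C}^{n-1}$, while $\dim\E^2=1+\dim\widetilde{\E}^2\geq 2$ for $\{e_1^2=e_1\}\oplus\widetilde{\E}\oplus\mathbb{C}^{n-s-1}$, since an $s$-dimensional evolution algebra of maximal index of nilpotency with $s\geq 2$ satisfies $\widetilde{\E}^2\neq 0$; and for $n=1$ the only non-nilpotent member is $\{e_1^2=e_1\}$. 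Finally, within the families parametrised by $\widetilde{\E}$, two algebras built from non-isomorphic $\widetilde{\E}$, or with different values of $s$, are themselves non-isomorphic: since $\widetilde{\E}$ has maximal index of nilpotency it admits no direct summand isomorphic to the zero algebra $\mathbb{C}$, so the factor $\mathbb{C}^{k}$ is the unique maximal zero-algebra direct summand of $\E$, and from it both $s$ and the isomorphism class of $\widetilde{\E}$ are recovered.

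The \textbf{main obstacle}, such as it is, is entirely bookkeeping: no new structural input is needed beyond Theorems~\ref{th:sol_sist}, \ref{conj_1}, \ref{conj_2} and \cite[Theorem~4.2]{CKO_19}. The two points that require a little care are, first, the degenerate ranges of the parameters --- $n=1$ and $s\in\{0,1\}$, together with the overlap between $\{e_1^2=e_1\}$ and the $n=1$ specialisation of $\{e_1^2=e_1\}\oplus\mathbb{C}^{n-1}$ --- so that the final list reads cleanly; and second, the uniqueness of the decomposition $\widetilde{\E}\oplus\mathbb{C}^{k}$, for which one should pin down the precise invariant that recovers $\widetilde{\E}$ and $s$, invoking the known structure of evolution algebras of maximal index of nilpotency. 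Neither step involves any computation of substance.
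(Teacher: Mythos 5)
Your proposal is correct and follows essentially the same route as the paper, which simply observes that the corollary is obtained by combining Theorems~\ref{conj_1} and \ref{conj_2} with \cite[Theorem~4.2]{CKO_19} (splitting on nilpotency) and offers no further proof. Your additional verification that the four families are pairwise non-isomorphic is a reasonable supplement but does not change the argument.
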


\subsection{Idempotents}
As mentioned above, an element $u$ of an evolution algebra $\E$ is an idempotent when it satisfies $u^2=u$. Moreover, as stated in \cite{MQ_23_idempotents}, the existence of idempotent elements in an arbitrary evolution algebra is still an open problem. Given an evolution algebra $\E$ with natural basis $\{e_1,\dots,e_n\}$ and structure matrix $M_B(\E)=(a_{ij})$, an element $u=x_1e_1+\dots+x_ne_n$ is an idempotent if we have
\[u^2=\sum_{i=1}^{n}x_i^2e_i^2=\sum_{i=1}^{n}x_i^2\sum_{j=1}^na_{ij}e_j=\sum_{i=1}^nx_ie_i,\]
which yields the condition $\sum_{j=1}^{n}x_j^2a_{ji}=x_i$ for all $i=1,\dots,n$, or, equivalently,
\begin{equation}\label{sist_3}
	M_B(\mathcal{E})^t\begin{pmatrix}
		x_1^2 \\ \vdots \\ x_n^2
	\end{pmatrix}=\begin{pmatrix}
		x_1 \\ \vdots \\ x_n
	\end{pmatrix}.
\end{equation}
Note that \eqref{sist_3} is equivalent to \eqref{sist_2} when $M_B(\E)$ is invertible, i.e., when $\E$ is regular, and in this case idempotents correspond exactly to one-dimensional subalgebras. Thus, we have the following immediate consequence.
\begin{theorem}\label{th:idem}
	Every complex regular evolution algebra admits an idempotent.
\end{theorem}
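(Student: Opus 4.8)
The plan is to reduce Theorem~\ref{th:idem} directly to Theorem~\ref{th:sol_sist} via the system~\eqref{sist_3}. First I would observe that an element $u=x_1e_1+\dots+x_ne_n$ of a regular evolution algebra $\E$ is an idempotent precisely when the vector $(x_1^2,\dots,x_n^2)^t$ and $(x_1,\dots,x_n)^t$ satisfy~\eqref{sist_3}; since $\E$ is regular, $M_B(\E)$ is invertible, so~\eqref{sist_3} can be rewritten as~\eqref{sist_2}, namely $(x_1^2,\dots,x_n^2)^t = (M_B(\E)^t)^{-1}(x_1,\dots,x_n)^t$. Setting $A = (M_B(\E)^t)^{-1}$, which is again a complex invertible matrix, this is exactly the system~\eqref{sist_1}.

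Next I would invoke Theorem~\ref{th:sol_sist} applied to the invertible matrix $A$: it guarantees a non-trivial solution $(x_1,\dots,x_n)$, i.e.\ one that is not identically zero. The corresponding element $u = x_1e_1+\dots+x_ne_n$ is then a non-zero element satisfying $u^2 = u$, that is, a non-zero idempotent of $\E$. This already proves the statement, since "admits an idempotent" is understood to mean a non-zero one (the zero element is trivially idempotent and of no interest). I would phrase the conclusion so as to make this explicit.

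The proof is essentially a one-line corollary, so there is no real obstacle; the only point that deserves a sentence of care is checking that $(M_B(\E)^t)^{-1}$ is still invertible (immediate) and that the non-triviality of the solution furnished by Theorem~\ref{th:sol_sist}—indeed, the stronger conclusion that at least two coordinates are non-zero—translates into $u\neq 0$. If one wished to emphasise the connection with subalgebras, one could additionally remark, as noted before the statement, that under regularity idempotents correspond bijectively to one-dimensional subalgebras via \cite[Lemma~1]{LP_25_regular}, so Theorem~\ref{th:idem} is really the same fact as the existence of a non-zero one-dimensional subalgebra; but for the proof itself the direct appeal to Theorem~\ref{th:sol_sist} suffices.
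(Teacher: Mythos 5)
Your proposal is correct and follows essentially the same route as the paper: the paper likewise derives the idempotent condition as the system~\eqref{sist_3}, observes that regularity makes it equivalent to~\eqref{sist_2} (hence to~\eqref{sist_1} with $A=(M_B(\E)^t)^{-1}$), and concludes immediately from Theorem~\ref{th:sol_sist}. Your added remarks on non-triviality and the correspondence with one-dimensional subalgebras match the paper's framing exactly.
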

We conclude by proposing a new conjecture concerning evolution algebras. Recall first that an evolution algebra $\E$ is called \textit{solvable} if there exists $n\in\mathbb{N}$ such that $\E^{(n)}=0$, where the sequence $\big(\E^{(k)}\big)$ is defined inductively by $\E^{(1)}=\E$ and $\E^{(k+1)}=\E^{(k)}\E^{(k)}$ for all $k\in\mathbb{N}$. Observe that the absence of idempotent elements is a necessary condition for an evolution algebra to be solvable. Indeed, if an idempotent $u\in\E$ exists, then $u\in\E^{(n)}$ for all $n\in\mathbb{N}$, which contradicts the assumption that $\E$ is solvable.
\begin{conjecture}\label{conj}
Let $\E$ be a complex evolution algebra. Then, the following assertions are equivalent:
\begin{enumerate}[\rm (i)]
\item $\E$ is solvable;
\item $\E$ admits no idempotents; and
\item the system~\eqref{sist_3} only admits the trivial solution.
\end{enumerate}
\end{conjecture}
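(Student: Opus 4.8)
The plan is to reduce the conjecture to a single geometric statement about one distinguished subalgebra, after disposing of the trivial implications. The implication $\mathrm{(i)}\Rightarrow\mathrm{(ii)}$ is already in the paragraph preceding the statement: a nonzero idempotent $u$ satisfies $u=u^2\in\E^{(k)}$ for every $k$, so $\E$ is not solvable. The equivalence $\mathrm{(ii)}\Leftrightarrow\mathrm{(iii)}$ is immediate from the computation preceding~\eqref{sist_3}: for $u=\sum_i x_ie_i$ the identity $u^2=u$ is precisely the system~\eqref{sist_3}, so a nonzero idempotent corresponds exactly to a nontrivial solution. Thus the whole content is $\mathrm{(ii)}\Rightarrow\mathrm{(i)}$, which I would prove in the contrapositive form: \emph{if $\E$ is not solvable, then $\E$ has an idempotent.}

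The first step is to pass to the stabilised term of the derived series. Since $\dim\E<\infty$, the chain $\E=\E^{(1)}\supseteq\E^{(2)}\supseteq\cdots$ stabilises at $\mathcal{R}:=\bigcap_k\E^{(k)}$; non-solvability means $\mathcal{R}\neq0$, and stabilisation gives $\mathcal{R}^2=\mathcal{R}$, so $\mathcal{R}$ is perfect. An idempotent of the subalgebra $\mathcal{R}$ is an idempotent of $\E$, so it suffices to produce one in $\mathcal{R}$. One case is immediate: fixing a natural basis $\{e_1,\dots,e_n\}$ of $\E$ and letting $J\subseteq\{1,\dots,n\}$ be the union of the supports (relative to this basis) of the elements of $\mathcal{R}$, if $\dim\mathcal{R}=\abs{J}$ then $\mathcal{R}=\spa\{e_i:i\in J\}$, hence $e_i^2\in\mathcal{R}$ for $i\in J$ and $\mathcal{R}$ is itself an evolution algebra with natural basis $\{e_i:i\in J\}$; being perfect it is regular, and Theorem~\ref{th:idem} finishes the argument. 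So the genuine difficulty is the case $\dim\mathcal{R}<\abs{J}$, in which $\mathcal{R}$ is a proper subspace of the coordinate subspace $\spa\{e_i:i\in J\}$ and it is no longer clear that $\mathcal{R}$ admits a natural basis.

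For that case I see two avenues. The structural one is to show, a posteriori, that $\mathcal{R}$ always carries a natural basis---for instance by proving that $\E^2$ is an evolution algebra and iterating, or by a direct analysis of the derived series through the digraph of $\E$---so that the previous paragraph applies verbatim; establishing this is, I expect, the main obstacle, as it is exactly where the failure of closure under subalgebras could bite, and it may conceivably require the hypothesis that the base field is algebraically closed. The geometric avenue mirrors the proof of Theorem~\ref{th:sol_sist}: on $\mathcal{R}$ the product is $v\cdot w=\sum_i v_iw_ie_i^2$, so ``$v^2=v$'' becomes $r:=\dim\mathcal{R}$ affine quadratic equations; one homogenises them to quadrics in $\mathbb{P}^r(\mathbb{C})$, checks (this needs a separate argument using $\mathcal{R}^2=\mathcal{R}$) that they have no common component, and applies B\'ezout to get $2^r$ intersection points with multiplicity. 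The decisive difference from Theorem~\ref{th:sol_sist} is the locus at infinity, which now consists of the absolute zero divisors $v\neq0$ with $v^2=0$, together with the degenerate affine points of multiplicity one; one must bound these and verify that for $r\geq2$ they cannot exhaust all $2^r$ points, leaving a genuine nonzero idempotent. Controlling the zero-divisor locus at infinity, and the multiplicities there, is the step I expect to be the true bottleneck.
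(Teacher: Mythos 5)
First, a point of order: the statement you are proving is presented in the paper as a \emph{conjecture}. The paper offers no proof, only supporting evidence in dimensions one and two (Theorem~\ref{th:idem} for the regular case, the classification of \cite{CLOR_14} for the rest), so there is no proof of record to compare against; the question is whether your argument settles the conjecture, and it does not. Your handling of (i)$\Rightarrow$(ii) and of (ii)$\Leftrightarrow$(iii) is correct and essentially restates observations already in the text, so, as you say, everything rests on the contrapositive of (ii)$\Rightarrow$(i). The reduction to the stabilised derived term $\mathcal{R}$ with $\mathcal{R}^2=\mathcal{R}$ is sound, and the case $\dim\mathcal{R}=\abs{J}$ is correctly dispatched via Theorem~\ref{th:idem}.

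The remaining case is where the whole difficulty lives, and both avenues you sketch have unfilled holes that you yourself flag. The structural avenue needs $\mathcal{R}$ (or each $\E^{(k)}$) to admit a natural basis, which is exactly the closure-under-subalgebras property the paper stresses can fail for evolution algebras; you give no argument for it. The geometric avenue is in worse shape than you suggest: in Theorem~\ref{th:sol_sist} the homogenised equations have quadratic parts $x_1^2,\dots,x_n^2$, which forces the intersection at infinity to be empty and makes the no-common-component check a one-line gcd argument. On $\mathcal{R}$ the quadratic part of $v^2=v$ is the squaring map of the algebra restricted to $\mathcal{R}$, whose zero locus at infinity is the projectivised set of elements with $v^2=0$; this can be nonempty and even positive-dimensional, in which case the intersection of the $r$ quadrics need not be finite and the naive B\'ezout count of $2^r$ points is simply unavailable (one would need excess-intersection or local multiplicity arguments). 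Neither the finiteness of the intersection, nor the absence of common components, nor the multiplicity-one claim at the degenerate points is established. What you have is a reasonable reduction together with a correct diagnosis of where the obstruction sits---not a proof, and not something the paper claims to possess either.
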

Let us provide some evidence in support of Conjecture \ref{conj}.
\begin{itemize}
\item If $n=1$, then the conjecture is obviously true.
\item If $n=2$, we may rely on the classification given in \cite{CLOR_14}. If $\E$ is regular, Theorem~\ref{th:idem} already guarantees the existence of an idempotent. Hence, it remains to check that every non-solvable isomorphism class also contains an idempotent. 
According to the classification, the non-solvable algebras in dimension 2 are
\[\E_1\colon e_1^2=e_1, e_2^2=0\quad\text{and}\quad\E_2\colon e_1^2=e_2^2=e_1,\]
and both clearly contain the idempotent $e_1$. Thus, Conjecture~\ref{conj} holds for $n=2$.
\end{itemize}

Unlike the situation for nilpotent evolution algebras (see \cite[Theorem 3.4]{EL_15}), to our knowledge there is no characterisation of solvable evolution algebras. Therefore, if true, this conjecture would provide a structural characterisation of solvable evolution algebras.

\section*{Acknowledgments}
Supported by the projects PID2020-115155GB-I00 and PID2024-155502NB-I00 granted by MICIU/AEI/10.13039/501100011033; 
and by Xunta de Galicia through the Competitive Reference Groups (GRC), ED431C
2023/31.
The second author was also supported by the predoctoral contract FPU21/05685 from the Ministerio de Ciencia, Innovación y Universidades (Spain).

%
%
%
%


\begin{thebibliography}{10}
\bibitem{BCS_22}
N. Boudi, Y. Cabrera~Casado, and M. Siles~Molina, \emph{Natural
  families in evolution algebras}, Publ. Mat. \textbf{66} (2022), no.~1,
  159--181. 

\bibitem{thesis_yolanda}
Y. Cabrera~Casado, \emph{Evolution algebras}, Ph.D. thesis, Universidad de
  M\'alaga, 2016.

\bibitem{CSV_17}
Y. Cabrera~Casado, M. Siles~Molina, and M.~V. Velasco,
  \emph{Classification of three-dimensional evolution algebras}, Linear Algebra
  Appl. \textbf{524} (2017), 68--108. 

\bibitem{CKO_19}
L.~M. Camacho, A.~Kh. Khudoyberdiyev, and B.~A. Omirov, \emph{On the property
  of subalgebras of evolution algebras}, Algebr. Represent. Theory \textbf{22}
  (2019), no.~2, 281--296.

\bibitem{CLOR_14}
J.~M. Casas, M.~Ladra, B.~A. Omirov, and U.~A. Rozikov, \emph{On evolution
  algebras}, Algebra Colloq. \textbf{21} (2014), no.~2, 331--342.

\bibitem{EL_15}
A. Elduque and A. Labra, \emph{Evolution algebras and graphs}, J.
  Algebra Appl. \textbf{14} (2015), no.~7, 1550103, 10.

\bibitem{EL_16}
A. Elduque and A. Labra, \emph{On nilpotent evolution algebras},
  Linear Algebra Appl. \textbf{505} (2016), 11--31.

\bibitem{HA_15}
A.~S. Hegazi and H. Abdelwahab, \emph{Nilpotent evolution algebras over
  arbitrary fields}, Linear Algebra Appl. \textbf{486} (2015), 345--360.

\bibitem{LP_25_regular}
M. Ladra and A. Pérez-Rodríguez, \emph{Regular evolution algebras
  are closed under subalgebras}, 2025.

\bibitem{MQ_23_idempotents}
F. Mukhamedov and I. Qaralleh, \emph{On extendibility of evolution
  subalgebras generated by idempotents}, Mathematics \textbf{11} (2023),
  no.~12.

\bibitem{Tian_08}
J.~P. Tian, \emph{Evolution algebras and their applications}, Lecture
  Notes in Mathematics, vol. 1921, Springer, Berlin, 2008.

\bibitem{TV_06}
J.~P. Tian and P. Vojt{\v{e}}chovsk{\'y}, \emph{Mathematical concepts
  of evolution algebras in non-{M}endelian genetics}, Quasigroups Related
  Systems \textbf{14} (2006), no.~1, 111--122.

\end{thebibliography}

\end{document}